\documentclass[12pt]{article}

\usepackage{geometry}               
\geometry{letterpaper}                   
\usepackage{color,amsmath,amsfonts,amssymb,amsthm}
\usepackage{epstopdf}
\usepackage{url}

\newtheorem{mythm}{Theorem}

\newtheorem{prop}[mythm]{Proposition}

\newtheorem*{claim}{Claim}

\usepackage[pdftex]{graphicx}

\setlength{\parindent}{1cm}
\usepackage{mathrsfs}

\title{The abelian complexity of the paperfolding word}

\author{Blake Madill and Narad Rampersad\\
Department of Mathematics and Statistics, University of Winnipeg\\
515 Portage Ave., Winnipeg, Manitoba, R3B 2E9 Canada\\
\texttt{\{blakemadill, narad.rampersad\}@gmail.com}}

\begin{document}
\maketitle
\begin{abstract}
We show that the abelian complexity function of the ordinary
paperfolding word is a $2$-regular sequence.
\end{abstract}

\section{Introduction}
In this paper we study the abelian complexity function of the ordinary
paperfolding word (here defined over $\{0,1\}$)
\[
{\bf f} = 0010011000110110001001110011011\cdots
\]

The \emph{(subword) complexity function} of an infinite word ${\bf w}$
is the function of $n$ that counts the number of distinct factors (or
blocks) of ${\bf w}$ of length $n$.  Allouche \cite{All92} determined
that the number of factors of length $n$ of the ordinary paperfolding
word (indeed of any paperfolding word) is $4n$ for $n \geq 7$.

The \emph{abelian complexity function} is the function of $n$ that
counts the number of abelian equivalence classes of factors of ${\bf
  w}$ of length $n$.  That is, we define an equivalence relation on
factors of ${\bf w}$ of the same length by saying that $u$ and $v$ are
\emph{abelian equivalent} if $u$ can be obtained by rearranging the
symbols of $v$.  The abelian complexity function counts the number of
such equivalence classes for each length $n$.  The abelian complexity
function $\rho(n)$ for the ordinary paperfolding word thus has the
following initial values:
\[
\begin{array}{|c|*{20}{c}|}
\hline
n&1&2&3&4&5&6&7&8&9&10&11&12&13&14&15&16&17&18&19&20\\
\hline
\rho(n)&2&3&4&3&4&5&4&3&4&5&6&5&4&5&4&3&4&5&6&5\\
\hline
\end{array}
\]

The study of the abelian complexity of infinite words is a relatively
recent notion and was first introduced by Richomme, Saari, and Zamboni
\cite{RSZ11}.  A series of subsequent papers have pursued the topic
\cite{BBT11, CR11, CRSZ11, RSZ10, Saa09, Tur10, Tur12}.  Some notable
sequences whose abelian complexity functions have been determined
include the Thue--Morse word \cite{RSZ11} and all Sturmian words (a
classical result \cite{CH73}).  Balkov\'a, B\v{r}inda, and Turek
\cite{BBT11} and Turek \cite{Tur10,Tur12} computed the abelian
complexity functions for some other classes of infinite words.
Richomme, Saari, and Zamboni \cite{RSZ10} determined the range of
values of the abelian complexity function of the Tribonacci word, but
they did not obtain a precise characterization of this function.  In
all of these cases, the words studied have a bounded abelian complexity
function.  However, the paperfolding word has an unbounded abelian
complexity function.  To the best of our knowledge the present paper
is the first to compute precisely the abelian complexity function of
an infinite word in the case where this function grows unboundedly
large.

We do not obtain a closed form for the abelian complexity function of
the paperfolding word; rather, we show that it is $2$-regular (see
\cite{AS03}), and provide a finite list of recurrence relations that
determine the function.  The ordinary paperfolding word is an example
of a $2$-automatic sequence (again see \cite{AS03}).  Recently,
Shallit and his co-authors have developed and exploited techniques for
algorithmically deciding many interesting properties of automatic
sequences \cite{ARS09,CRS11,GS12a,GS12b,GSS12,HS12,SS11}.  In
particular, Charlier, Rampersad, and Shallit \cite{CRS11} showed that
the subword complexity function of a $k$-automatic sequence is
$k$-regular and gave an algorithmic method to compute this function
(see also the recent improvement by Goc, Schaeffer, and Shallit
\cite{GSS12}).  However, this algorithmic methodology does not seem to
be applicable to any questions concerning ``abelian'' properties of
words.  For example, Holub \cite{Hol12} recently showed that the
paperfolding words contain arbitrarily large abelian powers; his
method was ad hoc, since the algorithmic techniques described above do
not seem to apply.  Our study of the abelian complexity function of
the paperfolding words is similarly ad hoc.

\section{Preliminaries}

We let
\[
{\bf f} = (f_n)_{n \geq 1} = 0010011000110110001001110011011\cdots
\]
denote the ordinary paperfolding word.  We have already made certain choices
in this statement, the first being that we are taking the
paperfolding word to be defined over the alphabet $\{0,1\}$, rather
than over $\{+1,-1\}$, which in some circumstances may be a more
natural choice.  We have also chosen to index the terms of the
paperfolding word starting with $1$, rather than $0$.

There are several ways to define the ordinary paperfolding word.  The
``number-theoretic'' definition is as follows.  For $n \geq 1$, write
$n = n'2^k$, where $n'$ is odd.  Then
\begin{equation}\label{odd_part}
f_n =
\begin{cases}
0 & \text{if } n' \equiv 1 \pmod 4 \\
1 & \text{if } n' \equiv 3 \pmod 4.
\end{cases}
\end{equation}
Another definition of the paperfolding word, of which we shall make
frequent use in the sequel, is that obtained by the so-called
\emph{Toeplitz construction}:
\begin{itemize}
\item
  Start with an infinite sequence of \emph{gaps}, denoted by ?.
  \[\begin{array}{*{16}{c}}
  ? & ? & ? & ? & ? & ? & ? & ? & ? & ? & ? & ? & ? & ? & ? & \cdots
  \end{array}\]
\item
  Fill every other gap with alternating $0$'s and $1$'s.
  \[\begin{array}{*{16}{c}}
  0 & ? & 1 & ? & 0 & ? & 1 & ? & 0 & ? & 1 & ? & 0 & ? & 1 & \cdots
  \end{array}\]
\item
  Repeat.
  \[\begin{array}{*{16}{c}}
  0 & 0 & 1 & ? & 0 & 1 & 1 & ? & 0 & 0 & 1 & ? & 0 & 1 & 1 & \cdots
  \end{array}\]
  \[\begin{array}{*{16}{c}}
  0 & 0 & 1 & 0 & 0 & 1 & 1 & ? & 0 & 0 & 1 & 1 & 0 & 1 & 1 & \cdots
  \end{array}\]
  \[\begin{array}{*{16}{c}}
  0 & 0 & 1 & 0 & 0 & 1 & 1 & 0 & 0 & 0 & 1 & 1 & 0 & 1 & 1 & \cdots
  \end{array}\]
\end{itemize}
In the limit, one obtains the ordinary paperfolding word.

Our main result concerns the abelian complexity function of ${\bf f}$.
Let us first define an equivalence relation $\sim$ on  words over
$\{0,1\}$ by $$u\sim v\mbox{ if }u\mbox{ is an anagram of }v.$$
If for $a \in \{0,1\}$ we write $|w|_a$ to denote the number of
occurrences of $a$ in the word $w$, then this definition amounts to
saying that $u \sim v$ if $|u|_a = |v|_a$ for all $a \in \{0,1\}$.
For example,  $00011 \sim 01010$.

If ${\bf w}$ is an infinite word, the \emph{abelian complexity
  function} of ${\bf w}$ is the function $\rho_{\bf w}^{ab} :
\mathbb{N} \to \mathbb{N}$, where for $n = 1,2,\ldots$, the value of
$\rho_{\bf w}^{ab}(n)$ is the number of distinct equivalence classes
of $\sim$ over all factors of length $n$ of ${\bf w}$.

Our goal is to show that $(\rho(n))_{n \geq 1} = (\rho_{\bf
  f}^{ab}(n))_{n \geq 1}$ is a
$2$-regular sequence.  To explain this concept we first define the
$k$-kernel of a sequence.  Let $k \geq 2$ be an integer and let ${\bf
  w} = (w(n))_{n \geq 0}$ be an infinite sequence of integers.  The
\emph{$k$-kernel} of ${\bf w}$ is the set of subsequences
\[
\mathcal{K}_k({\bf w}) = \{(w(k^en + c))_{n \geq 0} : e \geq 0, 0 \leq c < k^e\}.
\]
The sequence ${\bf w}$ is \emph{$k$-automatic} if its $k$-kernel is
finite.  For example, it is easy to verify that ${\bf f}$ is a
$2$-automatic sequence.  The sequence ${\bf w}$ is \emph{$k$-regular}
if the $\mathbb{Z}$-module generated by its $k$-kernel is finitely
generated;  that is, if there exists a finite subset $\{{\bf w}_1,
\cdots, {\bf w}_d\} \subseteq \mathcal{K}_k({\bf w})$ such that any
element of $\mathcal{K}_k({\bf w})$ can be written as a
$\mathbb{Z}$-linear combination of the ${\bf w}_i$, along with,
possibly, the constant sequence $(1)_{n \geq 0}$.  For further
details, see \cite{AS03}.

\section{$2$-regularity of $\rho(n)$}

From the previous discussion concerning $k$-regular sequences, it is
clear that

\begin{mythm}\label{2-regular}
The abelian complexity function $\rho(n) =\rho_{\bf f}^{ab}(n)$ of the
ordinary paperfolding word is $2$-regular.
\end{mythm}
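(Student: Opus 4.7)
The plan is to reduce $\rho(n)$ to a question about counting $1$s in factors of ${\bf f}$ and then to exploit the Toeplitz description of ${\bf f}$ to produce a system of recurrences that closes under the doubling operation $n \mapsto 2n, 2n+1$. For $i, n \geq 1$, let $s(i,n) = |f_i f_{i+1} \cdots f_{i+n-1}|_1$ denote the number of $1$s in the length-$n$ factor of ${\bf f}$ starting at position $i$, and set
\[
R(n) = \{s(i,n) : i \geq 1\}.
\]
Since the alphabet is $\{0,1\}$, two factors of the same length are abelian equivalent if and only if they contain the same number of $1$s, so $\rho(n) = |R(n)|$.

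A first, easy observation is that $R(n)$ is always a full integer interval: because $s(i+1,n) - s(i,n) = f_{i+n} - f_i \in \{-1, 0, 1\}$, the discrete path $i \mapsto s(i,n)$ takes jumps of size at most $1$ and hence hits every integer between its extremes. Writing $m(n) = \min R(n)$ and $M(n) = \max R(n)$, we therefore have $\rho(n) = M(n) - m(n) + 1$, and the problem reduces to proving that $m$ and $M$ are each $2$-regular.

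The next step is to translate the Toeplitz construction into recurrences for $s$. Since $(f_{2n-1})_{n \geq 1}$ is the alternating word $0101\cdots$ while $(f_{2n})_{n \geq 1} = {\bf f}$, splitting a length-$2n$ or length-$2n+1$ factor into its odd- and even-indexed positions expresses $s(i, 2n)$ and $s(i, 2n+1)$ as a sum of (a) a constant depending only on $i \bmod 2$ and $n \bmod 2$, counting the $1$s contributed by the alternating part, and (b) a value of the form $s(j, \ell)$ with $j$ an affine function of $i$ and $\ell \in \{\lfloor n/2 \rfloor, \lceil n/2 \rceil, n\}$. Taking $\min$ and $\max$ over $i$ and splitting on $i \bmod 2$ then yields expressions for $m(2n), m(2n+1), M(2n), M(2n+1)$ as minima or maxima of constants plus values of $m$ or $M$ at the arguments $\lfloor n/2 \rfloor$, $\lceil n/2 \rceil$, and $n$.

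The main technical obstacle will be closing this system under the doubling map: after further casework on $n \bmod 2^k$ for some small $k$, one must verify that the constants combine so that each of $m(2n), m(2n+1), M(2n), M(2n+1)$ is ultimately a $\mathbb{Z}$-linear combination of finitely many sequences lying in the $2$-kernels of $m$ and $M$, together with the constant sequence $1$. Once such recurrences are in hand, the $\mathbb{Z}$-modules generated by the $2$-kernels of $m$ and $M$ are finitely generated, so both sequences are $2$-regular; since the $2$-regular sequences form a $\mathbb{Z}$-algebra containing the constant sequences, $\rho(n) = M(n) - m(n) + 1$ is $2$-regular as well, yielding Theorem~\ref{2-regular}.
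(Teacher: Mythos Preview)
Your plan is correct and is essentially the paper's own approach: reduce $\rho(n)$ to the spread of the number of $1$'s (equivalently, of $\Delta(w)=|w|_0-|w|_1$) over length-$n$ factors, use the Toeplitz structure to push factors of length $\approx 2n$ down to factors of length $\approx n$, and do enough casework modulo a power of $2$ to close the system. The one simplification you are missing is the reverse-complement symmetry of ${\bf f}$: if $w$ is a factor then so is $\overline{w}^R$, whence $m(n)+M(n)=n$ in your notation (equivalently, $\min_w \Delta(w)=-\max_w \Delta(w)$). This lets the paper track only $M(n)$ and write $\rho(n)=M(n)+1$, cutting the number of sequences in the system in half. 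The step you flag as ``the main technical obstacle''---showing that after splitting on residues the $\min/\max$ always resolves to a single $\mathbb{Z}$-linear expression---is exactly what the paper carries out as Theorem~\ref{identities}, by case analysis modulo $16$; it is indeed where all the work lies, and your outline does not yet supply it.
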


\noindent is an immediate consequence of the more precise

\begin{mythm}\label{identities}
The function $\rho(n)$ satisfies the relations
\begin{eqnarray*}
\rho(4n) &=& \rho(2n) \\
\rho(4n+2) &=& \rho(2n+1)+1\\
\rho(16n+1) &=& \rho(8n+1)\\
\rho(16n+\{3,7,9,13\}) &=&
\rho(2n+1)+2\\
\rho(16n+5) &=& \rho(4n+1)+2\\
\rho(16n+11) &=& \rho(4n+3)+2\\
\rho(16n+15) &=& \rho(2n+2)+1.
\end{eqnarray*}
\end{mythm}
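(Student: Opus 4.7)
The plan is to work directly with the function $a(i,n):=|f_if_{i+1}\cdots f_{i+n-1}|_1$, since $\rho(n)=\#\{a(i,n):i\geq 1\}$, and to exploit the Toeplitz structure of $\mathbf{f}$ to produce recurrences on $a$. The first preliminary step is an \emph{interval lemma}: $\{a(i,n):i\geq 1\}$ always consists of consecutive integers, since $a(i{+}1,n)-a(i,n)=f_{i+n}-f_i\in\{-1,0,1\}$; consequently $\rho(n)$ equals the size of this interval, and any finite union of shifted copies of such intervals is again an interval whose length can be read off from the extremal values alone.

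The core tool is a Toeplitz decomposition: using $f_{2j}=f_j$, $f_{4k+1}=0$, and $f_{4k+3}=1$, a length-$m$ factor of $\mathbf{f}$ starting at position $i$ splits into its subsequence at positions of one parity (a factor of $\mathbf{f}$ of length $\lceil m/2\rceil$ or $\lfloor m/2\rfloor$ beginning near $\lceil i/2\rceil$) and its subsequence at the other parity (a block of $0101\ldots$ or $1010\ldots$ whose $1$-count depends only on $m$ and $i\bmod 4$). Applied with $m=4n$, this yields immediately $a(i,4n)=n+a(\lceil i/2\rceil,2n)$, whence $\rho(4n)=\rho(2n)$. For $m=4n+2$, a four-case analysis on $i\bmod 4$ gives $a(i,4n+2)\in\{n,n+1\}+\{a(j,2n+1):j\geq 1\}$, and the interval lemma combines these two shifted intervals into one of length $\rho(2n+1)+1$. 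For the seven identities at scale $16n+r$, I would iterate the decomposition to the appropriate depth so that the ``inner'' length matches the target $m'(r)\in\{8n+1,4n+1,4n+3,2n+1,2n+2\}$; at each level a fresh alternation constant depending on more bits of $i$ is added, producing a final formula of the form $a(i,16n+r)=C_r(i\bmod 16)+a(j,m'(r))$ where $j$ ranges over all positive integers as $i$ does through its residue class. By the interval lemma the asserted identity $\rho(16n+r)=\rho(m'(r))+c_r$ then reduces to checking that the set $\{C_r(s):0\leq s<16\}$ consists of exactly $c_r+1$ consecutive integers.

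The main obstacle I anticipate is the case bookkeeping for the $16n+r$ identities. The iteration depth and the relevant residue $i\bmod 2^k$ both depend on $r$, and at every halving step I must track how the current residue of $i$ propagates to the new starting index so as to read off the correct alternation constant. Verifying that the collected shifts $\{C_r(s)\}$ form a set of \emph{consecutive} integers (rather than a gapped set that would break the interval argument) is the most delicate point, especially for the four residues $\{3,7,9,13\}$ grouped in a single relation, which forces four a priori different residue classes of $i\bmod 16$ to yield the same span of shifts. For residues such as $r=1$ where the direct iteration produces an inner length differing from the stated one (here $8n$ versus $8n+1$ depending on the parity of $i$), one additionally has to verify that the extraneous values are absorbed into the interval coming from the matching-length case. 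Once these local checks go through, the seven recurrences should follow mechanically from the interval lemma.
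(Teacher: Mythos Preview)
Your proposal is correct and follows essentially the same route as the paper: both arguments reduce $\rho(n)$ to the span of an interval (the paper via $M(n)=\max\Delta$ and the reverse-complement symmetry, you via the more direct observation $a(i{+}1,n)-a(i,n)\in\{-1,0,1\}$), and then both carry out the Toeplitz decomposition with a case analysis on the residue of the starting position, including the extra step at $r=1$ of reconciling inner lengths $8n$ and $8n+1$.  The only cosmetic difference is that the paper tracks $\Delta(w)=|w|_0-|w|_1$ and argues case-by-case with a chosen maximal factor, whereas you track $|w|_1$ uniformly over all factors; the resulting computations are the same.
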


\noindent since the latter implies that the $\mathbb{Z}$-module
generated by the $2$-kernel of $(\rho(n))_{n \geq 0}$ is generated by the finite
set
\[
\{ (\rho(2n+1))_{n \geq 0}, (\rho(2n+2))_{n \geq 0}, (\rho(4n+1))_{n
  \geq 0}, (\rho(4n+3))_{n \geq 0}, (\rho(8n+1))_{n \geq 0}, (1)_{n
  \geq 0}\}.
\]

Before starting the proof, we introduce some notation.  Let $B_n$ be
the set of factors of ${\bf f}$ of length $n$.  We define functions
$\Delta:\{0,1\}^* \rightarrow \mathbb{Z}$ and $M:\mathbb{N}
\rightarrow \mathbb{Z}$ by $\Delta(w)=|w|_0-|w|_1$ and
$M(n)=\max\{\Delta(w):w\in B_n\}$.  A word $w \in B_n$ is
\emph{maximal} if $\Delta(w) = M(n)$.  Note that a word beginning and
ending with $1$ cannot be maximal.  For any word $w$ over $\{0,1\}$ we
write $\overline{w}$ for the complement of $w$; that is, the word
$\overline{w}$ is obtained from $w$ by changing $0$'s into $1$'s and
$1$'s into $0$'s.  We also denote the reversal of $w$ by $w^R$; that
is, if $w = w_1\cdots w_n$ then $w^R = w_n \cdots w_1$.  It is well
known that if $w$ is a factor of ${\bf f}$ then so is
$\overline{w}^R$.  Also we shall always use the notation such that
$x,y,z,u_i\in\{0,1\}$ for $i\in\mathbb{N}$.

We begin by establishing the following relationship between $\rho(n)$
and $M(n)$.

\begin{claim}
$\rho(n) = M(n) + 1.$
\end{claim}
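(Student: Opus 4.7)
The plan is to recast $\rho(n)$ in terms of $\Delta$ and then show that the set $S_n=\{\Delta(w):w\in B_n\}$ is exactly the arithmetic progression $\{-M(n),-M(n)+2,\ldots,M(n)-2,M(n)\}$, which contains precisely $M(n)+1$ elements.

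First I would observe that for a binary word $w$ of length $n$ the abelian class is determined by $|w|_0$, hence by $\Delta(w)=2|w|_0-n$. Therefore $\rho(n)=|S_n|$, and every element of $S_n$ has the same parity as $n$. Since $M(n)\in S_n$ by definition, and $M(n)\equiv n\pmod 2$, the set $S_n$ is contained in $\{-M(n),-M(n)+2,\ldots,M(n)\}$, which has $M(n)+1$ elements. This gives the upper bound $\rho(n)\leq M(n)+1$.

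For the lower bound I would proceed in two steps. Step one: the minimum $-M(n)$ is achieved. If $w\in B_n$ satisfies $\Delta(w)=M(n)$, then $\overline{w}^R\in B_n$ as well (by the noted closure of $\mathbf{f}$ under this operation), and $\Delta(\overline{w}^R)=-\Delta(w)=-M(n)$. Step two: every intermediate value of the correct parity is attained. For this I would use a sliding-window argument. Writing $w_i=f_i f_{i+1}\cdots f_{i+n-1}$, consecutive windows satisfy
\[
\Delta(w_{i+1})-\Delta(w_i)=2(f_i-f_{i+n})\in\{-2,0,2\}.
\]
Pick indices $i<j$ with $\{\Delta(w_i),\Delta(w_j)\}=\{M(n),-M(n)\}$ (possible since both extremal factors occur in $\mathbf{f}$). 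As $k$ runs from $i$ to $j$, the integer sequence $\Delta(w_k)$ changes by at most $2$ per step and traverses from one extreme to the other, so by the discrete intermediate value principle it hits every integer of the common parity in $[-M(n),M(n)]$.

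Combining the upper and lower bounds yields $\rho(n)=M(n)+1$. The main obstacle is really just the discrete IVT step, whose validity depends on the simple but crucial observation that sliding the window of length $n$ one position to the right changes $\Delta$ by at most $2$; the closure of $\mathbf{f}$ under $w\mapsto\overline{w}^R$ handles the symmetry between the two extremes.
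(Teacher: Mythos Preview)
Your proof is correct and follows essentially the same approach as the paper: identify $\rho(n)$ with $|\Delta(B_n)|$, use closure under $w\mapsto\overline{w}^R$ to get the symmetry $-M(n)\in\Delta(B_n)$, and then argue that all intermediate values of the right parity are attained. Your sliding-window / discrete intermediate value argument makes explicit the step that the paper leaves as ``it is not hard to see that $\Delta_{i+1}-\Delta_i=2$''.
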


\begin{proof}
It is clear that the abelian equivalence class of a word $w$ is
determined by its length $|w|=n$ and the value $\Delta(w)$.  Hence
$\rho(n) = |\Delta(B_n)|$.  Furthermore, since for any factor $w$ of
${\bf f}$, the reverse complement $\overline{w}^R$ also occurs in
${\bf f}$, and $\Delta(\overline{w}^R) = -\Delta(w)$, we see that
$\Delta(B_n)$ consists of values between $-M(n)$ and $M(n)$.  Let us
order the values of $\Delta(B_n)$:
\[
\Delta(B_n) = \{-M(n) = \Delta_1 < \Delta_2 < \cdots <
\Delta_{|\Delta(B_n)|} = M(n)\}.
\]
It is not hard to see that $\Delta_{i+1} - \Delta_i = 2$, so we
conclude that $|\Delta(B_n)| = M(n)+1$, as claimed.
\end{proof}

It follows that to prove any of the identities of
Theorem~\ref{identities}, it suffices to prove the corresponding
relation with $M$ in place of $\rho$.  For example, to show that
$\rho(16n+1) = \rho(8n+1)$, we may equivalently show that $M(16n+1) =
M(8n+1)$.  It is this approach that we shall take to prove
Theorem~\ref{identities}.

There is one more fact that we need to establish before proceeding
with the proof.

\begin{claim}
$\rho(n+1) = \rho(n) \pm 1$.
\end{claim}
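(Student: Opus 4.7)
The plan is to rephrase the claim in terms of $M$ via the previous claim ($\rho(n) = M(n)+1$), so it suffices to show that $M(n+1) - M(n) \in \{-1,+1\}$. I would first establish the two-sided bound $|M(n+1) - M(n)| \le 1$, and then a parity argument to exclude the value $0$.

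For the upper bound $M(n+1) \le M(n)+1$, I would take a maximal factor $w$ of length $n+1$ and consider its prefix $w'$ of length $n$. Since $w'$ is a factor of ${\bf f}$, we have $\Delta(w') \le M(n)$; deleting a single letter changes $\Delta$ by $\pm 1$, so $\Delta(w) \le \Delta(w') + 1 \le M(n)+1$. For the lower bound $M(n+1) \ge M(n)-1$, I would take a maximal factor $w$ of length $n$ and extend it by the next letter of ${\bf f}$ (which exists since ${\bf f}$ is infinite) to get a factor $w''$ of length $n+1$ with $\Delta(w'') \ge \Delta(w) - 1 = M(n)-1$, whence $M(n+1) \ge M(n)-1$.

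To exclude $M(n+1) = M(n)$, I would use the parity observation that for any binary word $w$, $\Delta(w) = |w|_0 - |w|_1 \equiv |w|_0 + |w|_1 = |w| \pmod 2$. Consequently every value in $\Delta(B_n)$ has the same parity as $n$, so $M(n) \equiv n \pmod 2$ and $M(n+1) \equiv n+1 \pmod 2$. Hence $M(n+1) - M(n)$ is odd, and combined with $|M(n+1) - M(n)| \le 1$ we conclude $M(n+1) - M(n) \in \{-1,+1\}$. Translating back via $\rho = M+1$ yields $\rho(n+1) = \rho(n) \pm 1$, as claimed.

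I do not expect any serious obstacle: the bound $|M(n+1)-M(n)|\le 1$ is a one-letter truncation/extension argument, and the parity step is immediate. The only point requiring a moment of care is ensuring extendability of a maximal factor, which is automatic since ${\bf f}$ is an infinite word.
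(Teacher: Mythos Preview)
Your proof is correct and follows essentially the same approach as the paper: bound $|M(n+1)-M(n)|\le 1$ by single-letter truncation and extension, then rule out equality. The paper phrases the second step as a case split on whether some $w'\in B_{n+1}$ is abelian equivalent to $w0$ (for $w$ maximal of length $n$), but the reason equality cannot occur is exactly the parity observation $\Delta(w)\equiv |w|\pmod 2$ that you make explicit.
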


\begin{proof}
Let $w \in B_n$ be a word satisfying $\Delta(w) = M(n)$.  If there
exists $w' \in B_{n+1}$ such that $w' \sim w0$, then $M(n+1) = M(n)
+ 1$ and so $\rho(n+1) = \rho(n)+1$.  If not, then there exists $w' \in
B_{n+1}$ such that $w' \sim w1$.  In this case $M(n+1) = M(n) - 1$ and
so $\rho(n+1) = \rho(n) - 1$.
\end{proof}

We now prove each of the relations of Theorem~\ref{identities}.

\begin{claim}
$\rho(4n)=\rho(2n)$.
\end{claim}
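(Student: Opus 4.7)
The plan is to reduce the claim to the equivalent statement $M(4n) = M(2n)$ (using the reduction noted after the first claim) and then exploit the Toeplitz structure of $\bf f$. The key structural input is that $f_{2k} = f_k$ for all $k \geq 1$, while $f_{2k-1} \in \{0,1\}$ is determined by the parity of $k$ (giving $0,1,0,1,\ldots$ at the odd indices); this is immediate from the Toeplitz construction displayed in Section~2 (or from the number-theoretic definition, by stripping one factor of $2$ from $2k$ and observing that its odd part is the odd part of $k$).

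Next, I would take an arbitrary factor $w = f_p f_{p+1}\cdots f_{p+4n-1} \in B_{4n}$ and partition its positions by parity. Since the block length $4n$ is even, exactly $2n$ of the indices $p,p+1,\ldots,p+4n-1$ are odd and exactly $2n$ are even, regardless of the parity of $p$. The odd-indexed contributions consist of $2n$ consecutive terms of the periodic word $010101\cdots$, so they contain exactly $n$ zeros and $n$ ones, contributing $0$ to $\Delta(w)$. The even-indexed contributions are $f_{2k_0}, f_{2k_0+2},\ldots, f_{2k_0+4n-2}$ for some $k_0 \geq 1$, and by $f_{2k} = f_k$ these equal $f_{k_0} f_{k_0+1}\cdots f_{k_0+2n-1}$, which is a factor $v \in B_{2n}$. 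Hence $\Delta(w) = \Delta(v)$, and we conclude $\Delta(B_{4n}) \subseteq \Delta(B_{2n})$, so $M(4n) \leq M(2n)$.

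For the reverse inclusion, I would observe that every $v \in B_{2n}$ arises this way: given $v = f_q f_{q+1}\cdots f_{q+2n-1}$, set $p = 2q$ and consider the length-$4n$ factor $w$ of $\bf f$ starting at position $2q$; its even-indexed subword is precisely $v$ (again by $f_{2k}=f_k$), while its odd-indexed subword contributes $0$ to $\Delta$. Therefore $\Delta(w) = \Delta(v)$, yielding $\Delta(B_{2n}) \subseteq \Delta(B_{4n})$, and so $M(4n) = M(2n)$, which gives $\rho(4n) = \rho(2n)$.

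There is no real obstacle here beyond careful parity bookkeeping; the entire argument rests on the Toeplitz identity $f_{2k}=f_k$ together with the observation that the odd-indexed subword is $010101\cdots$ and hence abelian-balanced in any even-length window. The more substantial identities in Theorem~\ref{identities} (those indexed by $16n + r$) will presumably require a finer Toeplitz-style decomposition and case analysis on $r$, but the present claim comes out cleanly at the coarsest level of the construction.
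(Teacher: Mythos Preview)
Your proof is correct and follows essentially the same approach as the paper: both arguments use the Toeplitz decomposition to see that in any length-$4n$ factor the odd-indexed letters form a balanced block $x\overline{x}x\overline{x}\cdots$ contributing $0$ to $\Delta$, while the even-indexed letters form a factor in $B_{2n}$. The paper writes this directly in the form $w' = xw_1\overline{x}w_2\cdots\overline{x}w_{2n}$ and argues by contradiction for the upper bound, whereas you phrase it via $f_{2k}=f_k$ and establish the two inclusions $\Delta(B_{4n})\subseteq\Delta(B_{2n})$ and $\Delta(B_{2n})\subseteq\Delta(B_{4n})$ explicitly; the content is the same.
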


\begin{proof}
Let $w=w_1w_2\cdots w_{2n}\in B_{2n}$ such that $\Delta(w)=M(2n)$. Then we know that $w':=xw_1\overline{x}w_2\cdots\overline{x}w_{2n}\in B_{4n}$ and $\Delta(w')=\Delta(w)$. We claim that $\Delta(w')=M(4n)$. Suppose there was a factor $z\in B_{4n}$ such that $\Delta(z)>\Delta(w')$. Then 
$$
z=yz_1\overline{y}z_2\cdots \overline{y}z_{2n} \mbox{ or } z=z_1\overline{y}z_2\cdots \overline{y}z_{2n}y.
$$
Furthermore $z_1z_2\cdots z_{2n}\in B_{2n}$ and $\Delta(z_1z_2\cdots z_{2n})>\Delta(w)$,
which is a contradiction. Therefore $M(2n)=M(4n)$ and so
$\rho(2n)=\rho(4n)$.
\end{proof}

\begin{claim}
$\rho(4n+2)=\rho(2n+1)+1$.
\end{claim}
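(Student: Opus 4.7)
The plan is to adapt the argument of the preceding claim ($\rho(4n) = \rho(2n)$), replacing the even-length alternating subword that appeared there with one of odd length $2n+1$. By the first claim, proving $\rho(4n+2) = \rho(2n+1)+1$ reduces to proving $M(4n+2) = M(2n+1)+1$.

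The structural fact I would rely on, read off immediately from the Toeplitz construction, is that $f_{2k} = f_k$ and that $\mathbf{f}$ restricted to odd indices alternates $0,1,0,1,\ldots$. Consequently every factor of $\mathbf{f}$ of length $4n+2$ has one of the two interleaved shapes
\[
w = x\, y_1\, \overline{x}\, y_2\, x\, y_3\, \cdots\, x\, y_{2n+1}
\quad\text{or}\quad
w = y_1\, x\, y_2\, \overline{x}\, y_3\, x\, \cdots\, y_{2n+1}\, x,
\]
according as it starts at an odd or even index of $\mathbf{f}$. In both shapes, $y = y_1 \cdots y_{2n+1}$ is itself a factor of $\mathbf{f}$ of length $2n+1$, and the remaining $2n+1$ letters consist of $n+1$ copies of $x$ together with $n$ copies of $\overline{x}$, contributing $+1$ to $\Delta(w)$ if $x=0$ and $-1$ if $x=1$.

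The upper bound $M(4n+2) \le M(2n+1)+1$ would then follow by the same contradiction argument used in the proof of Claim 3: any $z \in B_{4n+2}$ with $\Delta(z) > M(2n+1)+1$ would, after extracting its $y$-subword $z_1 \cdots z_{2n+1} \in B_{2n+1}$, produce a word of $\Delta$-value exceeding $M(2n+1)$, which is impossible.

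For the matching lower bound, I would take a maximal factor $u = f_k f_{k+1} \cdots f_{k+2n}$ and realize it as the $y$-subword of one of the two shapes. The only delicate point --- and, I expect, the main obstacle --- is that we need $x = 0$, but $x$ is forced by the location at which $w$ sits in $\mathbf{f}$. A short parity calculation using the number-theoretic description of $f$ on odd indices handles this: if $k$ is odd then the length-$(4n+2)$ factor of $\mathbf{f}$ starting at position $2k-1$ realizes the first form with $x = f_{2k-1} = 0$, whereas if $k$ is even then the factor starting at position $2k$ realizes the second form with $x = f_{2k+1} = 0$. Either way $\Delta(w) = \Delta(u) + 1 = M(2n+1)+1$, giving $M(4n+2) \ge M(2n+1)+1$ and hence the claim.
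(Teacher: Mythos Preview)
Your proof is correct and follows essentially the same approach as the paper: decompose a length-$(4n+2)$ factor into a length-$(2n+1)$ subfactor interleaved with an alternating block contributing $\pm 1$ (giving the upper bound), and then construct a witness for the lower bound by embedding a maximal $u \in B_{2n+1}$ at the even positions of a length-$(4n+2)$ factor. The only cosmetic difference is in the lower-bound construction: you force $x=0$ by choosing the starting position $2k-1$ or $2k$ according to the parity of $k$, whereas the paper works inside the length-$(4n+3)$ window $x\,w_1\,\overline{x}\,w_2\cdots x\,w_{2n+1}\,\overline{x}$ and drops whichever endpoint equals $1$.
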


\begin{proof}
Let $w=w_1w_2\cdots w_{2n+1}\in B_{2n+1}$ such that $\Delta(w)=M(2n+1)$. Now let $w'\in B_{4n+2}$ such that $\Delta(w')=M(4n+2)$. Then we know that $w'$ is a factor of 
$$
xw_1\overline{x}w_2\cdots x w_{2n+1}\overline{x}.
$$
If $x=0$ then we may choose $w'=xw_1\overline{x}w_2\cdots x w_{2n+1}$ and if $x=1$ we may choose $w'=w_1\overline{x}w_2\cdots x w_{2n+1}\overline{x}$. In either case we have $M(4n+2)=M(2n+1)+1$. Therefore $\rho(4n)=\rho(2n+1)+1$, as desired.
\end{proof}

\begin{claim}
$\rho(16n+1)=\rho(8n+1)$.
\end{claim}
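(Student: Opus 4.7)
The plan is to use the reduction $\rho(n)=M(n)+1$ already established and prove instead that $M(16n+1)=M(8n+1)$, exploiting the two layers of Toeplitz structure in $\mathbf{f}$. Two facts drive the whole argument: on the odd $\mathbf{f}$-positions the word alternates $0,1,0,1,\ldots$, so a block of $2k$ consecutive odd positions contributes $\Delta=0$ and a block of $2k+1$ contributes $\pm 1$; and on even positions $f_{2m}=f_m$, so the even-indexed values of any factor read off a shorter factor of $\mathbf{f}$.

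For $M(16n+1)\ge M(8n+1)$ I would pick $v=f_q\cdots f_{q+8n}$ with $\Delta(v)=M(8n+1)$ and set $w:=f_{2q}\cdots f_{2q+16n}\in B_{16n+1}$. The $8n+1$ even $\mathbf{f}$-positions of $w$ decode via $f_{2m}=f_m$ to $v$, while the $8n$ odd positions are alternating with $\Delta=0$, so $\Delta(w)=\Delta(v)=M(8n+1)$.

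For $M(16n+1)\le M(8n+1)$, I would analyze a factor $w\in B_{16n+1}$ starting at position $p$ by cases on $p\bmod 4$. If $p$ is even, the same interleaving writes $w$ as an alternating block of length $8n$ with $\Delta=0$ interlaced with a factor $v\in B_{8n+1}$, so $\Delta(w)=\Delta(v)\le M(8n+1)$. If $p$ is odd, the alternating block has length $8n+1$ and contributes $+1$ (when $p\equiv 1\pmod 4$) or $-1$ (when $p\equiv 3\pmod 4$), while the length-$8n$ even-position block decodes to a factor $u'\in B_{8n}$. Applying the decomposition a second time to $u'$ yields $\Delta(u')=\Delta(\hat u)$ for some $\hat u\in B_{4n}$, and invoking the already-proved identity $\rho(4n)=\rho(2n)$ gives $\Delta(w)\le 1+M(4n)=1+M(2n)$.

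The main obstacle is then to show $1+M(2n)\le M(8n+1)$. I would prove this by exhibiting a witness: given a maximal $u_0=f_k\cdots f_{k+2n-1}\in B_{2n}$, the factor $v^{\ast}:=f_{4k-3}\cdots f_{4k+8n-3}\in B_{8n+1}$ starts at position $4(k-1)+1\equiv 1\pmod 4$, and applying the same two-level decomposition to $v^{\ast}$ (at length $8n+1$ rather than $16n+1$) shows that its $4n+1$ odd-indexed positions contribute $+1$ while its doubly-decoded even-indexed positions recover $u_0$, for a total $\Delta(v^{\ast})=1+M(2n)$. Combining everything, $M(16n+1)\le\max(M(8n+1),\,1+M(2n))=M(8n+1)$, and with the lower bound this closes the proof. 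The delicate part is the parity bookkeeping in the odd-$p$ case and in the construction of $v^{\ast}$, where I need to track carefully whether the inner factor starts at an odd or an even $\mathbf{f}$-position so that the alternating contributions add up with the correct sign.
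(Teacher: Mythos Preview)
Your proposal is correct and follows essentially the same route as the paper. Both arguments reduce to $M$, split a maximal $w\in B_{16n+1}$ into the ``even start'' case (giving $\Delta(w)\le M(8n+1)$ directly) and the ``odd start'' case (giving $\Delta(w)\le 1+M(8n)=1+M(2n)$ via the already-proved $\rho(4n)=\rho(2n)$), and then construct a length-$(8n+1)$ witness by embedding a maximal factor of $B_{2n}$ through two Toeplitz levels at a position $\equiv 1\pmod 4$; your $v^\ast=f_{4k-3}\cdots f_{4k+8n-3}$ is exactly this construction. The only cosmetic difference is that the paper phrases the key inequality as $M(8n+1)=M(8n)+1$ while you phrase it as $M(8n+1)\ge 1+M(2n)$, but since $M(8n)=M(2n)$ these are the same statement.
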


\begin{proof}
Let $w\in B_{16n+1}$ such that $\Delta(w)=M(16n+1)$. Then either
\begin{enumerate}
\item $w=w_1xw_2 \cdots \overline{x}w_{8n+1}$, or
\item $w=xw_1\overline{x}w_2\cdots \overline{x}w_{8n}x$.
\end{enumerate}
If $w$ is of type (i) then $\Delta(w)=\Delta(w_1w_2\cdots
w_{8n+1})\leq M(8n+1)$. If $w$ is of type (ii) then by the maximality of
$w$ we must have $x=0$ and so $\Delta(w)=\Delta(w_1w_2\cdots w_{8n})+1
\leq M(8n)+1$. We now show that $M(8n+1)=M(8n)+1$. 
To show this let $z\in B_{8n}$ such that $\Delta(z)=M(8n)$. Then $z$ is a factor of
$$
z':=y\overline{x}\overline{y}z_1yx\overline{y}z_2\cdots z_{2n}y\overline{x}\overline{y}\in B_{8n+3}
$$
where $\Delta(z_1z_2\cdots z_{2n})=M(2n)=M(8n)$. Then we may choose $z=z_1yx\overline{y}\cdots z_{2n}y\overline{x}\overline{y}$ so that $\Delta(z)=M(8n)$.
If $y=1$ then $\Delta(\overline{y}z_1yx\overline{y}z_2\cdots
z_{2n}y\overline{x}\overline{y})=M(8n)+1=M(8n+1)$. If $y=0$ then
$\Delta(y\overline{x}\overline{y}z_1\cdots z_{2n}y)=M(8n)+1=M(8n+1)$.
Therefore $M(8n+1)=M(8n)+1$.

To show that $M(16n+1) \geq M(8n+1)$, note that if $w_1w_2\cdots
w_{8n+1}$ is maximal, then $w=w_1xw_2 \cdots \overline{x}w_{8n+1}$ is
a factor of ${\bf f}$ and $\Delta(w)=M(8n+1)$.  Hence $M(16n+1) =
M(8n+1)$.
\end{proof}

\begin{claim}
$\rho(16n+3)=\rho(2n+1)$.
\end{claim}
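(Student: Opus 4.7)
By the identity $\rho(n)=M(n)+1$, this is equivalent to $M(16n+3)=M(2n+1)+2$, and the plan is to bound $M(16n+3)$ from above and below. Starting from a maximal $w\in B_{16n+3}$, I would split into the two Toeplitz decompositions. If $w=xu_1\bar{x}u_2x\cdots xu_{8n+1}\bar{x}$ with $u\in B_{8n+1}$, the alternating block is balanced and we get $\Delta(w)=\Delta(u)\leq M(8n+1)=M(2n)+1\leq M(2n+1)+2$, using $M(8n+1)=M(2n)+1$ from the proof for $\rho(16n+1)=\rho(8n+1)$ together with $|M(n+1)-M(n)|\leq 1$.

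The harder case is $w=u_1xu_2\bar{x}\cdots xu_{8n+2}$ with $u\in B_{8n+2}$; here the alternating block contributes $\epsilon_x\in\{\pm 1\}$. Applying the same decomposition to $u$ produces some $v\in B_{4n+1}$ and a second alternating block contributing $\epsilon_y\in\{\pm 1\}$, so that $\Delta(w)=\Delta(v)+\epsilon_x+\epsilon_y$. The key observation is that $\epsilon_x$ and $\epsilon_y$ are not independent: using~(\ref{odd_part}) together with $f_m=0$ for $m\equiv 1\pmod 4$ and $f_m=1$ for $m\equiv 3\pmod 4$, each of $x$ and $y$ is an explicit value of~${\bf f}$ determined by the starting position $p$ of~$w$, and a short calculation shows that $(\epsilon_x,\epsilon_y)$ depends only on $p\bmod 8$. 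The four residues $p\equiv 0,2,4,6\pmod 8$ give sign pairs $(+1,+1),(-1,+1),(+1,-1),(-1,-1)$ respectively, so $\epsilon_x+\epsilon_y=+2$ occurs only for $p\equiv 0\pmod 8$. In that subcase $v$ begins at the even position $p/4$, and a third application of the Toeplitz decomposition (now to $v$) has its alternating block of \emph{even} length $2n$ balanced, yielding $\Delta(v)\leq M(2n+1)$ and hence $\Delta(w)\leq M(2n+1)+2$. In the remaining three residues $\epsilon_x+\epsilon_y\leq 0$, so the crude single-step estimate $\Delta(v)\leq M(4n+1)\leq M(2n)+1$ already gives $\Delta(w)\leq M(2n+1)+2$.

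For the matching lower bound I would reverse the construction: pick $v^*\in B_{2n+1}$ with $\Delta(v^*)=M(2n+1)$, occurring at some position $s\geq 1$ in~${\bf f}$, and take $w^*$ to be the length-$(16n+3)$ factor of~${\bf f}$ starting at $p=8s$. Since $p\equiv 0\pmod 8$, unrolling the decomposition places $w^*$ in the ``good'' subcase with intermediate factor $v\in B_{4n+1}$ starting at $2s$ (even) and inner core equal to $v^*$, so $\Delta(w^*)=\Delta(v^*)+2=M(2n+1)+2$, giving $M(16n+3)\geq M(2n+1)+2$.

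The main obstacle I anticipate is the three-level bookkeeping: verifying that $(\epsilon_x,\epsilon_y)=(+1,+1)$ is precisely the configuration forcing $p\equiv 0\pmod 8$, and that this residue in turn forces $v$ to start at an even position, so that the sharper bound $M(2n+1)$ replaces $M(4n+1)$. This coupling is essential rather than cosmetic: $M(4n+1)$ can strictly exceed $M(2n+1)$ (for instance $M(25)=5>3=M(13)$), so the naive estimate $M(8n+2)+1=M(4n+1)+2$ on the harder case would otherwise be too weak.
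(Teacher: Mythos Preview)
Your argument is correct and is essentially the same approach as the paper's: both perform a three-level Toeplitz decomposition and use that the pair of alternating-block contributions is determined by the starting position modulo~$8$, with the unique $(+1,+1)$ case forcing the third-level alternating block to be balanced so that the sharp bound $M(2n+1)$ applies. The only difference is organizational---the paper unrolls all three levels at once into explicit patterns in $x,y,z,u_i$ and reads off eight subcases, whereas you iterate the one-level decomposition and track the signs $\epsilon_x,\epsilon_y$ step by step (handling the odd-$p$ case via the shortcut $M(8n+1)=M(2n)+1$ rather than by direct computation)---but the key coupling observation and the lower-bound construction are identical.
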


\begin{proof}
Let $w\in B_{16n+3}$ such that $\Delta(w)=M(16n+3)$. Then we know that $w$ is a factor of
\begin{enumerate}
\item $xy\overline{x}zx\overline{yx}u_1\cdots\overline{z}x\overline{yx}u_{2n}xy\overline{x}zx\overline{y}$ or,
\item $xy\overline{x}u_1x\overline{yx}\overline{z}\cdots u_{2n}x\overline{yx}zxy\overline{x}u_{2n+1}x\overline{y}.$
\end{enumerate}
In either case, by the Toeplitz construction, the position of $u_1$ in ${\bf f}$ is
congruent to $0\pmod{8}$. Thus in case (i) the initial $xy$ starts at a position which
is congruent to $1\pmod{8}$ and so by \eqref{odd_part} we have
$x=y=0$. In case (ii), the initial $xy$ begins at a position congruent
to $5\pmod{8}$ and so by \eqref{odd_part} we have $x=0$ and $y=1$.\\
\underline{Case 1}. Suppose $w$ is a factor of (i). Then we have that
$$
\Delta(w)=
\begin{cases}
\Delta(u_1u_2\cdots u_{2n})+1 &\ \mbox{if $w$ begins in the first position}\\
\Delta(u_1u_2\cdots u_{2n})+\Delta(z) &\ \mbox{if $w$ begins in the second, third, or fourth position}.
\end{cases}
$$
\underline{Case 2}. Suppose $w$ is a factor of (ii). Then we have that
$$
\Delta(w)=
\begin{cases}
\Delta(u_1u_2\cdots u_{2n})-1 &\ \mbox{if $w$ begins in the first position}\\
\Delta(u_1u_2\cdots u_{2n+1})-2 &\ \mbox{if $w$ begins in the second position}\\
\Delta(u_1u_2\cdots u_{2n+1}) &\ \mbox{if $w$ begins in the third position}\\
\Delta(u_1u_2\cdots u_{2n+1})+2 &\ \mbox{if $w$ begins in the fourth position}.
\end{cases}
$$
In any event, we have that $M(16n+3)\leq M(2n+1)+2$ so that $\rho(16n+3)\leq\rho(2n+1)+2$.

Now let $\Delta(u_1u_2\cdots u_{2n+1})=M(2n+1)$. Let 
$$w=u_1x\overline{yxz}\cdots
u_{2n}x\overline{yx}zxy\overline{x}u_{2n+1}x\overline{y}\in
B_{16n+3},$$
where $xy=01$, so that $\Delta(w)=M(2n+1)+2$. Thus $\rho(16n+3)\geq\rho(2n+1)+2$ and so the result follows.
\end{proof}

\begin{claim}
$\rho(16n+5)=\rho(4n+1)+2$.
\end{claim}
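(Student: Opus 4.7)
The plan, as in the previous claims, is to use the first claim to reduce the problem to showing $M(16n+5) = M(4n+1) + 2$. The key structural observation is that one level of Toeplitz unfolding, using the self-similarity $f_{4m} = f_m$ of ${\bf f}$, decomposes any length-$(16n+5)$ factor $w$ of ${\bf f}$ starting at position $p$ into three interleaved parts: a sub-factor $u_1 u_2 \cdots$ of ${\bf f}$ of length $4n+1$ or $4n+2$ (the values at absolute positions $\equiv 0 \pmod 4$); an alternating pattern $x,\overline{x},x,\overline{x},\ldots$ at odd positions; and an alternating pattern $y,\overline{y},y,\overline{y},\ldots$ at positions $\equiv 2 \pmod 4$.

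For the upper bound I would split into four cases according to $p$ modulo $4$ and write $w$ in a corresponding structured form. For instance, when $p \equiv 1 \pmod 4$,
\[
w = xy\overline{x}\, u_1\, x\overline{y}\overline{x}\, u_2\, xy\overline{x}\, u_3 \cdots u_{4n+1}\, x,
\]
with $u_1 \cdots u_{4n+1} \in B_{4n+1}$. A direct count of $0$'s and $1$'s in the fixed portion gives $\Delta(w) = \Delta(u_1 \cdots u_{4n+1}) + 1 + \varepsilon$ with $\varepsilon \in \{-1,+1\}$, hence $\Delta(w) \le M(4n+1) + 2$. The cases $p \equiv 2, 3 \pmod 4$ yield $\Delta(w) \le M(4n+1)$, and the case $p \equiv 0 \pmod 4$ yields $\Delta(w) \le M(4n+2) + 1 \le M(4n+1) + 2$, the latter inequality coming from $|M(n+1)-M(n)|=1$ in the second claim.

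For the lower bound I would take a maximal $u_1 \cdots u_{4n+1} \in B_{4n+1}$ and lift it through the $p \equiv 1 \pmod 4$ template above, further fixing the alignment $p \equiv 1 \pmod 8$ so that $x = y = 0$. Then both alternating contributions are $+1$, and the lifted word $w$ satisfies $\Delta(w) = M(4n+1) + 2$.

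The hard part will be verifying that this lifted $w$ is genuinely a factor of ${\bf f}$: the alignment $p \equiv 1 \pmod 8$ forces the sub-factor to appear in ${\bf f}$ at an \emph{odd} starting index, and it is not obvious that every maximal length-$(4n+1)$ factor has such an occurrence. If this parity constraint fails, the fallback is the $p \equiv 0 \pmod 8$ alignment, which achieves $\Delta(w) = M(4n+2) + 1 = M(4n+1) + 2$ precisely when $M(4n+2) = M(4n+1) + 1$. Since $M(4n+2) - M(4n+1) \in \{-1, +1\}$ by the second claim, one of the two strategies should always apply, but arranging this dichotomy cleanly --- likely by invoking the closure of the factor set of ${\bf f}$ under reverse complement --- is where the most care will be needed.
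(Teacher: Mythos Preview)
Your overall strategy matches the paper's: reduce to $M(16n+5)=M(4n+1)+2$, get the upper bound by a case split on $p\bmod 4$, and for the lower bound lift a maximal $v\in B_{4n+1}$ through the $p\equiv 1\pmod 8$ template to force $x=y=0$. You also correctly identify the crux: the lift requires $v$ to occur in ${\bf f}$ at an \emph{odd} index.

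Where you diverge from the paper is in how to handle this parity obstruction. Your proposed fallback (switch to $p\equiv 0\pmod 8$ and hope that $M(4n+2)=M(4n+1)+1$, or invoke reverse complement) is shakier than you suggest. The $p\equiv 0\pmod 8$ template needs a maximal length-$(4n+2)$ factor at an \emph{even} index, so you have merely traded one parity constraint for another; and reverse complement turns a maximal factor into a minimal one, so it does not relocate a maximal factor to the other parity. As written, the dichotomy is not established.

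The paper's resolution is both simpler and avoids the fallback entirely: it applies one more level of Toeplitz structure to $v$ itself. A maximal $v\in B_{4n+1}$ has the form either $zu_1\overline{z}u_2\cdots\overline{z}u_{2n}z$ (odd start) or $u_1\overline{z}u_2\cdots zu_{2n+1}$ (even start). In the even case, $v$ is preceded by $z$ and followed by $\overline{z}$ in ${\bf f}$; sliding the window one step left (when $z=0$) or one step right (when $z=1$) produces a factor $v'$ of the odd-start form with $\Delta(v')=\Delta(v)$ --- the maximality of $v$ is exactly what rules out the bad cases $z=0,u_{2n+1}=1$ and $z=1,u_1=1$. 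Hence a maximal $v$ at an odd index always exists, and your primary lift always succeeds with no fallback needed.
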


\begin{proof}
Let $w\in B_{16n+5}$ such that $\Delta(w)=M(16n+5)$. Then this factor occurs (in the paperfolding word) at position 1, 2, 3, or 4 of a factor of the form
$$
xy\overline{x}w_1x\overline{yx}w_2\cdots xy\overline{x}w_{4n+1}x\overline{yx}w_{4n+2}.
$$
If $w$ starts at positions 1, 2, or 3, it is easily verified that
$$\Delta(w)\leq\Delta(w_1w_2\cdots w_{4n+1})+2.$$ Also if $w$ starts at
position 4, then $$\Delta(w)\leq \Delta(w_1w_2\cdots w_{4n+2}) + 1 \leq
\Delta(w_1w_2\cdots w_{4n+1}) + 2.$$  Thus it suffices to find a $w\in
B_{16n+5}$ such that $\Delta(w)=M(4n+1)+2$.

Let $v=w_1 w_2\cdots w_{4n+1}\in B_{4n+1}$ such that $\Delta(v)=M(4n+1)$. We have either
\begin{enumerate}
\item $v=zu_1\overline{z}u_2\cdots\overline{z}u_{2n}z$ or 
\item $v=u_1\overline{z}u_2\cdots\overline{z}u_{2n}zu_{2n+1}$,
\end{enumerate}
where $u_i,z\in\{0,1\}$. However, case~(ii) can always be reduced to
case~(i), since in case~(ii) the word $v$ is preceeded by $z$ and
followed by $\overline{z}$ in ${\bf f}$, and so we can always find a
factor $v'$ of form~(i) such that $\Delta(v') = \Delta(v)$. So let $v$
be of the form corresponding to case~(i) above. Then we know
$$
w':=xy\overline{x}zx\overline{yx}u_1xy\overline{xz}\cdots xy\overline{x}zx\in B_{16n+5}.
$$
The position of $u_1$ in ${\bf f}$ is congruent to $0\pmod{8}$. Thus
the initial $xy$ occurs at a position in ${\bf f}$ that is congruent
to $1\pmod{8}$. Hence $x=y=0$ and so
$\Delta(w')=M(4n+1)+2$.
\end{proof}

\begin{claim}
$\rho(16n+7)=\rho(2n+1)+2$.
\end{claim}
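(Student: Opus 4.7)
The plan is to mimic the strategy of the previous claims. First I would reduce the statement to $M(16n+7) = M(2n+1) + 2$ via $\rho(n) = M(n) + 1$, and then classify factors $w \in B_{16n+7}$ by the residue of their starting position $p$ modulo $8$. By the Toeplitz construction together with \eqref{odd_part}, every letter of ${\bf f}$ at a position not divisible by $8$ is fully determined by its residue modulo $8$, except at positions $\equiv 4 \pmod 8$, where the value alternates with the $8$-block index. Packaging this information exactly as in the $16n+3$ proof, every such $w$ is a subfactor starting at position $1$, $2$, $3$, or $4$ of one of two superstructures of length $16n+10$:
\[
\text{(i)}\quad xy\overline{x}zx\overline{yx}u_1 \cdot xy\overline{x}\overline{z}x\overline{yx}u_2 \cdots xy\overline{x}zx\overline{yx}u_{2n+1} \cdot xy,
\]
with $x = y = 0$, covering starting residues $1, 2, 3, 4 \pmod 8$; and
\[
\text{(ii)}\quad xy\overline{x}u_1 x\overline{yx}\overline{z}\cdot xy\overline{x}u_2 x\overline{yx}z\cdots xy\overline{x}u_{2n+1} x\overline{yx}\overline{z}\cdot xy,
\]
with $x = 0$, $y = 1$, covering starting residues $5, 6, 7, 0 \pmod 8$.

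Next I would carry out the eight sub-case computations. In each one, $w$ decomposes into an initial partial $8$-block, $2n$ complete $8$-blocks, and a final partial $8$-block. Using the identity $\Delta(xy\overline{x}zx\overline{yx}) = \Delta(z)$ and the fact that the $z$-values alternate block-to-block (so their contributions cancel in pairs over any $2n$ consecutive blocks), $\Delta(w)$ takes the form $\Delta(u_1 \cdots u_k) + c$ with $k \in \{2n, 2n+1\}$ and $|c| \leq 2$. Because the deep letters satisfy $u_i = f_{8i}$ and $(f_{8i})_{i \geq 1} = {\bf f}$ by the Toeplitz construction, the string $u_1 \cdots u_k$ is itself a factor of ${\bf f}$, so $\Delta(u_1 \cdots u_k) \leq M(k)$; combined with $M(2n) \leq M(2n+1) + 1$ (which follows from the Claim $|M(n+1) - M(n)| = 1$), this yields $\Delta(w) \leq M(2n+1) + 2$ in every sub-case, establishing $M(16n+7) \leq M(2n+1) + 2$.

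For the matching lower bound I would take $v = v_1 \cdots v_{2n+1} \in B_{2n+1}$ with $\Delta(v) = M(2n+1)$, and pick any occurrence $v = f_j f_{j+1} \cdots f_{j+2n}$ of $v$ in ${\bf f}$. The case analysis isolates three sub-cases in which the bound is tight: start $\equiv 4 \pmod 8$ under (i), and starts $\equiv 7$ or $\equiv 0 \pmod 8$ under (ii). The first requires $j$ to be odd (so that the free variable $z$ in the first $8$-block of the embedding evaluates to $0$), while the latter two require $j$ to be even; together they handle both parities, producing a factor $w \in B_{16n+7}$ of ${\bf f}$ with $v$ as its deep letters and $\Delta(w) = \Delta(v) + 2 = M(2n+1) + 2$.

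The main obstacle is the bookkeeping in the eight sub-case $\Delta$-computations: fixing the values of $x$ and $y$ via \eqref{odd_part}, correctly tracking whether the leading and trailing partial $8$-blocks begin with $z$ or $\overline{z}$, and checking that the partial-block contributions combine correctly with the cancelled full-block contributions. A secondary subtlety is the parity argument for the lower bound, to guarantee that whichever parity the occurrence index $j$ of $v$ happens to have, a matching embedding is actually valid in ${\bf f}$.
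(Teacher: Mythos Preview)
Your approach is correct in outline but takes a genuinely different route from the paper. The paper first invokes the already-proved identity $\rho(4n+2)=\rho(2n+1)+1$ to reduce the claim to $M(16n+7)=M(4n+2)+1$, and then works at the \emph{level-$2$} Toeplitz structure: a maximal $w'\in B_{4n+2}$ is embedded into $xy\overline{x}w_1x\overline{yx}w_2\cdots x\overline{yx}w_{4n+2}xy\overline{x}$, and the four length-$(16n+7)$ subfactors $z_1,z_2,z_3,z_4$ are compared. This handles both inequalities at once with only four sub-cases and no need to pin down $x$ or $y$. You instead stay at the level-$3$ (8-block) structure and attack $M(16n+7)=M(2n+1)+2$ directly, at the cost of eight sub-cases and a separate parity argument for the lower bound. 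The paper's reduction is shorter and reuses prior work; your version is self-contained and closer in spirit to how the paper treats the $16n+3$ and $16n+13$ cases.

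One caution on your upper-bound summary: the blanket statement ``$|c|\le 2$ together with $M(2n)\le M(2n+1)+1$'' does not by itself yield $\Delta(w)\le M(2n+1)+2$, since $k=2n$ with $c=2$ would only give $M(2n+1)+3$. In the actual eight computations you will find that $c\le 1$ whenever $k=2n$ (for instance, start $\equiv 1\pmod 8$ under (i) gives $\Delta(w)=\Delta(u_1\cdots u_{2n})+\Delta(z)$), so the bound does hold --- but the write-up should record this sharper observation rather than the weaker $|c|\le 2$.
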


\begin{proof}
Since $\rho(4n+2) = \rho(2n+1)+1$, to prove this result we show that
$\rho(16n+7)=\rho(4n+2)+1.$ Let $w'=w_1w_2\cdots w_{4n+2}\in
B_{4n+2}$ such that $\Delta(w')=M(4n+2).$ Then we know that
$$
xy\overline{x}w_1x\overline{yx}w_2\cdots x\overline{yx}w_{4n+2}xy\overline{x}\in B_{16n+11}.
$$
For convenience let us define the following:
\begin{eqnarray*}
z_1&:=&xy\overline{x}w_1x\overline{yx}w_2\cdots w_{4n+1}x\overline{yx},\\
z_2&:=&y\overline{x}w_1x\overline{yx}w_2\cdots x\overline{yx}w_{4n+2},\\
z_3&:=&\overline{x}w_1x\overline{yx}w_2\cdots x\overline{yx}w_{4n+2}x, \mbox{and}\\
z_4&:=&w_1x\overline{yx}w_2\cdots x\overline{yx}w_{4n+2}xy
\end{eqnarray*}
so that
\begin{eqnarray*}
\Delta(z_1) &=& \Delta(4n+1) \leq M(4n+2)+1,\\
\Delta(z_2)&=&M(4n+2)+\Delta(\overline{x}),\\
\Delta(z_3)&=&M(4n+2)+\Delta(\overline{y}),\\
\Delta(z_4)&=&M(4n+2)+\Delta(x).
\end{eqnarray*}
Also, by the maximality of $\Delta(w')$ we must have that $$M(16n+7)\in\{\Delta(z_1),\Delta(z_2),\Delta(z_3),\Delta(z_4)\}.$$ If $x=0$ then it can easily be verified that $\Delta(z_i)\leq\Delta(z_4)$ for $i=1,2,3$. Therefore $M(16n+7)=\Delta(z_4)=M(4n+2)+1.$ Similarly if $x=1$, then $M(16n+7)=\Delta(z_2)=M(4n+2)+1.$ Therefore $\rho(16n+7)=\rho(4n+2)+1=\rho(2n+1)+2.$
\end{proof}

\begin{claim}
$\rho(16n+9)=\rho(2n+1)+2$.
\end{claim}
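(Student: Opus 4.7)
The proof will follow the template of the preceding claims: we establish $M(16n+9) = M(2n+1) + 2$ by matching upper and lower bounds via the Toeplitz construction.

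For the upper bound, let $w \in B_{16n+9}$ be maximal and write $p$ for its starting position in ${\bf f}$. Classify by $r := p \bmod 8$. In each of the eight cases the symbols of $w$ corresponding to positions of ${\bf f}$ whose residues mod $8$ lie in $\{1,2,3,5,6,7\}$ are fixed by~\eqref{odd_part}; the symbols at positions $\equiv 4 \pmod 8$ alternate between $a$ and $\overline{a}$, with $a$ determined by $p \bmod 16$ via~\eqref{odd_part}; and the remaining $k$ positions, located at $\equiv 0 \pmod 8$, form a consecutive factor $u_1 \cdots u_k$ of ${\bf f}$. The count $k$ equals $2n+2$ when $r=0$ and $2n+1$ otherwise. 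Tallying $|w|_0 - |w|_1$ in each case yields
\[
\Delta(w) = c + \varepsilon + \Delta(u_1 \cdots u_k),
\]
where $c \in \{-1, 0, +1\}$ collects the fixed contribution and $\varepsilon \in \{-1, 0, +1\}$ collects the alternating one; specifically $c = +1$ when $r \in \{1,2,5\}$, $c = -1$ when $r \in \{3,6,7\}$, and $c = 0$ when $r \in \{0,4\}$. Combining these tallies with the bound $M(2n+2) \leq M(2n+1)+1$ (a consequence of the earlier claim $\rho(n+1)=\rho(n)\pm 1$, used in the case $r = 0$), one verifies $\Delta(w) \leq M(2n+1) + 2$ in every case, so $M(16n+9) \leq M(2n+1) + 2$.

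For the matching lower bound, fix a maximal $v = u_1 \cdots u_{2n+1} \in B_{2n+1}$ with $\Delta(v) = M(2n+1)$, occurring at some position $q$ in ${\bf f}$, and set $p = 8q - 7$ if $q$ is odd and $p = 8q - 3$ if $q$ is even, so that $p \equiv 1 \pmod{16}$ or $p \equiv 13 \pmod{16}$ respectively. In either sub-case, applying~\eqref{odd_part} to the corresponding $r \in \{1,5\}$ pins down the alternating symbol $a = 0$ and makes the fixed plus alternating contributions to $\Delta$ of the length-$(16n+9)$ factor $w' = f_p f_{p+1} \cdots f_{p+16n+8}$ equal to $c + \varepsilon = 1 + 1 = 2$. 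Since the free block of $w'$ coincides with $v$ by construction, $\Delta(w') = M(2n+1) + 2$, giving $M(16n+9) \geq M(2n+1) + 2$ and completing the identity.

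The main technical load is the eight-way case analysis for the upper bound; each individual case is just a routine tally of Toeplitz-fixed $0$s and $1$s, but care must be taken when $r = 0$ produces a free block of $k = 2n+2$ symbols rather than the generic $k = 2n+1$, as the upper bound in that case requires the auxiliary inequality $M(2n+2) \leq M(2n+1) + 1$ to meet the target $M(2n+1) + 2$.
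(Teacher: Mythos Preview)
Your argument is correct, but the paper takes a shorter route. Rather than performing an eight-way case analysis on the starting residue modulo $8$, the paper reduces the claim to showing $M(16n+9)=M(16n+8)+1$. The upper bound $M(16n+9)\leq M(16n+8)+1$ is immediate from the general fact $\rho(n+1)=\rho(n)\pm 1$; for the lower bound the paper takes a maximal $w'\in B_{4n+2}$, embeds it via the Toeplitz construction into a block of length $16n+11$, and observes that sliding by one position in either direction produces factors $z_1,z_2\in B_{16n+9}$ with $\Delta(z_1)=M(4n+2)+\Delta(x)$ and $\Delta(z_2)=M(4n+2)+\Delta(\overline{x})$, one of which necessarily equals $M(4n+2)+1=M(16n+8)+1$. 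The link to $M(2n+1)$ then comes for free from the already-proved identities $\rho(4m)=\rho(2m)$ and $\rho(4m+2)=\rho(2m+1)+1$, which give $M(16n+8)=M(8n+4)=M(4n+2)=M(2n+1)+1$.

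What your approach buys is self-containment: apart from the single use of $M(2n+2)\leq M(2n+1)+1$ in the $r=0$ case, you do not appeal to the earlier recursions, and the lower-bound construction (choosing $p\equiv 1$ or $13\pmod{16}$ according to the parity of $q$ so that both the fixed and alternating contributions equal $+1$) is explicit. The paper's approach buys brevity: by routing through $M(16n+8)$ it avoids the residue-by-residue tally entirely and needs no separate upper-bound computation.
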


\begin{proof}
Let $w'=w_1w_2\cdots w_{4n+2}\in B_{4n+2}$ such that $\Delta(w')=M(4n+2).$ Then we know that 
$$
w':=xy\overline{x}w_1x\overline{yx}w_2\cdots x\overline{yx}w_{4n+2}xy\overline{x}\in B_{16n+11}.
$$
For convenience let us define the following:
\begin{eqnarray*}
z_1&:=&xy\overline{x}w_1x\overline{yx}w_2\cdots w_{4n+1}x\overline{yx}w_{4n+2}x,\\
z_2&:=&\overline{x}w_1x\overline{yx}w_2\cdots x\overline{yx}w_{4n+2}xy\overline{x}.
\end{eqnarray*}
Then we have $z_1,z_2\in B_{16n+9}$, $\Delta(z_1)=M(4n+2)+\Delta(x)=M(16n+8)+\Delta(x)$, and $\Delta(z_2)=M(16n+8)+\Delta(\overline{x})$. Therefore, regardless of the value of $x$, we can find a factor $v\in\{z_1,z_2\}$ such that $\Delta(v)=M(16n+8)+1$. Therefore $M(16n+9)\geq M(16n+8)+1$. But we also know that $M(16n+9)\leq M(16n+8)+1$. Therefore $M(16n+9)=M(16n+8)+1=M(2n+1)+2$, and the result follows. 
\end{proof}

\begin{claim}
$\rho(16n+11)=\rho(4n+3)+2$.
\end{claim}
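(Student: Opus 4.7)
The plan is to follow the template of the preceding claims. Since $\rho(n)=M(n)+1$, it suffices to establish that $M(16n+11)=M(4n+3)+2$, which I would prove by a two-sided bound.

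For the upper bound, I would observe that any $w\in B_{16n+11}$ occurs as a sub-factor of some Toeplitz expansion
\[
T := xy\overline{x}\,v_1\,x\overline{yx}\,v_2\cdots x\overline{yx}\,v_{4n+3}\,xy\overline{x}\in B_{16n+15}
\]
based on a factor $v=v_1v_2\cdots v_{4n+3}$ of $\mathbf{f}$ of length $4n+3$. There are five possible starting positions of $w$ in $T$, yielding candidate sub-factors $z_1,z_2,z_3,z_4,z_5$. A direct computation (using that the alternation of $y$ and $\overline{y}$ across the interior filler blocks of $T$ contributes $0$ to $\Delta$) should give
\begin{eqnarray*}
\Delta(z_1) &=& \Delta(v_1\cdots v_{4n+2}) + \Delta(y), \\
\Delta(z_2) &=& \Delta(v) - \Delta(x) + \Delta(y), \\
\Delta(z_3) &=& \Delta(v), \\
\Delta(z_4) &=& \Delta(v) + \Delta(x) - \Delta(y), \\
\Delta(z_5) &=& \Delta(v_2\cdots v_{4n+3}) - \Delta(y).
\end{eqnarray*}
Since $\Delta(x),\Delta(y)\in\{\pm 1\}$ and $M(4n+2)\leq M(4n+3)+1$, each right-hand side is bounded by $M(4n+3)+2$, giving $M(16n+11)\leq M(4n+3)+2$.

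For the lower bound, I would pick $v\in B_{4n+3}$ with $\Delta(v)=M(4n+3)$ and, following the alignment argument used in the proof of $\rho(16n+5)=\rho(4n+1)+2$, arrange $v_1$ to occur at a position of $\mathbf{f}$ congruent to $0\pmod{8}$. Then the initial $xy$ of $T$ lies at a position congruent to $5\pmod{8}$ in $\mathbf{f}$, whence \eqref{odd_part} forces $x=0$ and $y=1$. Consequently,
\[
\Delta(z_4) = \Delta(v) + \Delta(x) - \Delta(y) = M(4n+3)+1+1 = M(4n+3)+2,
\]
proving $M(16n+11)\geq M(4n+3)+2$.

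The chief technical work will be the bookkeeping required to verify the five $\Delta(z_i)$ identities, in particular tracking how the alternating $y,\overline{y}$ contributions across the $4n+4$ filler blocks of $T$ pairwise cancel and leave only the boundary contributions asserted. The lower bound's positional alignment step closely mirrors the corresponding argument used in the $\rho(16n+5)$ proof, so no new conceptual ingredients are needed there.
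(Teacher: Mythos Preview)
Your proposal is correct and follows essentially the same approach as the paper: bound $M(16n+11)$ from above by embedding an arbitrary length-$(16n+11)$ factor in a Toeplitz template built over a length-$(4n+3)$ factor $v$, and from below by choosing a maximal $v$ and aligning it so that the boundary symbols $x,y$ take favourable values. The only cosmetic differences are that you use a template $T$ of length $16n+15$ with five starting positions (the paper uses length $16n+14$ with four), and that your $\Delta(z_i)$ bookkeeping is made more explicit than the paper's ``it is easily verified''.

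One small point to watch: the alignment step ``arrange $v_1$ to occur at a position $\equiv 0\pmod 8$'' is not automatic for an arbitrary maximal $v$; it requires first replacing $v$ by a maximal factor of the parity-shifted form (what the paper calls case~(ii)). The $\rho(16n+5)$ proof you cite performs the analogous reduction but in the \emph{opposite} direction (to case~(i), odd starting position), so you will need the mirror-image shift argument here. This is straightforward, but be sure to state it rather than simply invoking the earlier proof. Also, the block pattern in your displayed $T$ has a minor slip: since $4n+3$ is odd, $v_{4n+3}$ is preceded by $xy\overline{x}$ and followed by $x\overline{yx}$, not the other way round; your $\Delta(z_i)$ formulas are nonetheless correct.
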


\begin{proof}
Let $w\in B_{16n+11}$ such that $\Delta(w)=M(16n+11)$. Then this factor occurs (in the paperfolding word) at position 1, 2, 3, or 4 of a factor of the form
$$
xy\overline{x}w_1x\overline{yx}w_2\cdots xy\overline{x}w_{4n+3}x\overline{y}.
$$
Regardless of the position where $w$ starts, it is easily verified that $$\Delta(w)\leq\Delta(w_1w_2\cdots w_{4n+3})+2\leq M(4n+3)+2.$$ Thus it suffices to find a $w\in B_{16n+11}$ such that $\Delta(w)=M(4n+3)+2$. 

Let $v=w_1 w_2\cdots w_{4n+3}\in B_{4n+3}$ such that $\Delta(v)=M(4n+3)$. We have either
\begin{enumerate}
\item $v=zu_1\overline{z}u_2\cdots zu_{2n+1}\overline{z}$ or 
\item $v=u_1\overline{z}u_2\cdots zu_{2n+1}zu_{2n+1}\overline{z}u_{2n+2}$,
\end{enumerate}
where $u_i,z\in\{0,1\}$. However, case (i) can always be reduced to case (ii). So let $v$ be of the form corresponding to case (ii) above. Then we know
$$
w':=u_1x\overline{yx}u_1xy\overline{xz}\cdots x\overline{yx}\overline{z}xy\overline{x}u_{2n+2}x\overline{y}
$$
is a factor of the paperfolding word. The position of $u_1$ in ${\bf f}$ is congruent to $0\pmod{8}$. Then $xy=01$ and so $\Delta(w')=M(4n+3)+2$.
\end{proof}

\begin{claim}
$\rho(16n+13)=\rho(2n+1)+2$.
\end{claim}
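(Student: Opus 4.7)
By Claim~1, proving $\rho(16n+13)=\rho(2n+1)+2$ reduces to proving $M(16n+13)=M(2n+1)+2$, and the plan is to establish matching upper and lower bounds, following the template of the proof of $\rho(16n+3)=\rho(2n+1)+2$.

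For a factor $w\in B_{16n+13}$ I would use the Toeplitz structure of ${\bf f}$ to decompose its positions across residue classes modulo~$8$: odd positions alternate as $x,\overline{x}$; positions $\equiv 2\pmod 4$ alternate as $y,\overline{y}$; positions $\equiv 4\pmod 8$ alternate as $z,\overline{z}$; and positions $\equiv 0\pmod 8$ carry a subsequence $u_1u_2\cdots$ which, by the self-similarity $f_{8k}=f_k$, is itself a factor of ${\bf f}$. Depending on the starting position of $w$ modulo~$16$, there are $2n+1$ or $2n+2$ of the $u_i$'s, and $w$ is a factor of one of a short list of explicit templates. For the upper bound I would verify case by case, using \eqref{odd_part} to pin down the forced values of $x,y,z$, that the layer contribution to $\Delta(w)$ has absolute value at most~$2$ when the $u$-count is $2n+1$ and at most~$1$ when the $u$-count is $2n+2$; together with $M(2n+2)\leq M(2n+1)+1$ from Claim~2, this would give $\Delta(w)\leq M(2n+1)+2$. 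For the lower bound I would pick $v=u_1\cdots u_{2n+1}\in B_{2n+1}$ with $\Delta(v)=M(2n+1)$ and, using $f_{8k}=f_k$, locate a length-$(16n+13)$ window of ${\bf f}$ starting at some $p\equiv 1\pmod 8$ whose $\equiv 0\pmod 8$ subsequence is precisely $v$; the constraints $x=y=0$ forced by \eqref{odd_part} then make the layer contribution exactly $+2$, so $\Delta(w')=M(2n+1)+2$.

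The hard part will be the upper-bound case analysis. The naive estimate ``three alternating layers, each contributing $\pm 1$, hence layer total at most~$3$'' would give $\Delta(w)\leq M(2n+2)+3$, which is too weak. The key is to cross-reference, for each residue class $p\pmod{16}$, the values of $x,y,z$ forced by \eqref{odd_part} against the parities of the individual layer lengths, and thereby confirm that the absolute layer total never exceeds~$2$ and is at most~$1$ whenever the $u$-count is $2n+2$.
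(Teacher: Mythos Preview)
Your proposal is correct and follows essentially the same route as the paper: reduce to $M$, use the Toeplitz layering (the $x$-, $y$-, $z$-, and $u$-layers) to place $w$ inside one of two explicit templates, determine $x,y$ from \eqref{odd_part}, bound the layer contribution by $2$, and realise the bound by a window starting at a position $\equiv 1\pmod 8$ so that $x=y=0$. The only cosmetic difference is that you separate the $u$-count $=2n{+}2$ case and close it via $M(2n{+}2)\le M(2n{+}1)+1$, whereas the paper absorbs the extra $u_{2n+2}$ into the layer tally and asserts $\Delta(w)\le\Delta(u_1\cdots u_{2n+1})+2$ directly.
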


\begin{proof}
Let $w\in B_{16n+13}$ such that $\Delta(w)=M(16n+13)$. Then $w$ occurs in the paperfolding word as a factor of
\begin{enumerate}
\item $xy\overline{x}zx\overline{yx}u_1xy\overline{xz}x\overline{yx}u_2\cdots x\overline{yx}u_{2n+2}$, or
\item $xy\overline{x}u_1x\overline{yx}\overline{z}xy\overline{x}u_2x\overline{yx}z\cdots u_{2n+2}x\overline{yx}z$.
\end{enumerate}
In both cases we have that the position of $u_1$ in ${\bf f}$ is congruent to $0\pmod{8}$. Thus in case (i) we have $x=y=0$ and in case (ii) we have $xy=01$. In either case it can be verified that $\Delta(w)\leq\Delta(u_1\cdots u_{2n+1})+2\leq M(2n+1)+2$.

Now let $\Delta(u_1u_2\cdots u_{2n+1})=M(2n+1)$. Then
$$w=xy\overline{x}zx\overline{yx}u_1xy\overline{xz}x\overline{yx}u_2\cdots
x\overline{yx} u_{2n+1}xy\overline{xz}x \in B_{16n+3}$$
and $x=y=0$. Hence $\Delta(w)=M(2n+1)+2$ and so $M(16n+3)\geq M(2n+1)+2$. The result follows immediately.
\end{proof}

\begin{claim}
$\rho(16n+15)=\rho(2n+2)+1$.
\end{claim}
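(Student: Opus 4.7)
The plan is to prove that $M(16n+15) = M(2n+2)+1$, from which the claim follows by the identity $\rho(n) = M(n)+1$ established earlier.

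For the upper bound $M(16n+15) \leq M(2n+2)+1$, I would take an arbitrary $w = f_p f_{p+1}\cdots f_{p+16n+14} \in B_{16n+15}$ and split the analysis according to the residue $r = p \bmod 8$. By the Toeplitz construction, the letters of ${\bf f}$ at positions $\equiv r \pmod 8$ for $r \in \{1,2,3,5,6,7\}$ are fixed at $0,0,1,0,1,1$ respectively; at positions $\equiv 4 \pmod 8$ they alternate as $0,1,0,1,\ldots$; and at positions $\equiv 0 \pmod 8$ they take the values $f_{8k} = f_k$, so these ``free'' letters themselves form a factor of ${\bf f}$. For each choice of $r$, one counts how many positions of $w$ fall into each residue class mod $8$; this yields a decomposition
\[
\Delta(w) = c_r + \varepsilon_r + \Delta(u_1 u_2 \cdots u_m),
\]
where $c_r \in \{-1,0,1\}$ is the fixed-letter contribution, $\varepsilon_r \in \{-1,0,1\}$ is the alternating contribution (which vanishes when the count of positions $\equiv 4 \pmod 8$ in $w$ is even), and $u_1 u_2 \cdots u_m$ is a factor of ${\bf f}$ of length $m \in \{2n+1, 2n+2\}$. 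The bound $c_r + \varepsilon_r + \Delta(u_1 \cdots u_m) \leq M(2n+2)+1$ is then immediate when $m = 2n+2$; the only case with $m = 2n+1$ occurs at $r = 1$ and is handled by invoking $M(2n+1) \leq M(2n+2)+1$, a consequence of the earlier claim that $\rho(n+1) = \rho(n)\pm 1$.

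For the lower bound $M(16n+15) \geq M(2n+2)+1$, I would fix a maximal factor $v = f_q f_{q+1}\cdots f_{q+2n+1} \in B_{2n+2}$ with $\Delta(v) = M(2n+2)$ and set $p = 8q$, so that $r = 0$. Using $f_{8k}=f_k$, the free letters of the corresponding $w = f_{8q} f_{8q+1}\cdots f_{8q+16n+14} \in B_{16n+15}$ are precisely $f_q, f_{q+1}, \ldots, f_{q+2n+1} = v$; working out the decomposition above in the case $r=0$ gives $c_0 = 1$, $\varepsilon_0 = 0$, and hence $\Delta(w) = M(2n+2)+1$.

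The main obstacle is the bookkeeping of the eight residue cases: for each $r$, counting the positions of $w$ in each residue class modulo $8$ and determining the correct values of $c_r$ and the parity of the alternating count. The computations are routine, but an error in any single case would break the upper bound. A presentation closer to the style of the preceding proofs would package this into two or three explicit templates in the symbols $x,y,z,u_i$, with the specific values of $x,y$ forced by \eqref{odd_part} from the starting position modulo $8$; this is equivalent to the case analysis above.
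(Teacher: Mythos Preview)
Your plan is correct and would yield a valid proof, but it takes a different and heavier route than the paper. The paper first rewrites the target via the already-proved identity $\rho(4n)=\rho(2n)$ as $\rho(16n+15)=\rho(4n+4)+1$, i.e.\ $M(16n+15)=M(4n+4)+1$. The upper bound is then a one-liner: $M(16n+15)\le M(16n+16)+1=M(4n+4)+1$, using $\rho(n+1)=\rho(n)\pm1$ and $M(16n+16)=M(8n+8)=M(4n+4)$. For the lower bound the paper works modulo~$4$ rather than modulo~$8$: from a maximal $w'=w_1\cdots w_{4n+4}\in B_{4n+4}$ it forms the two length-$(16n+15)$ factors $z_1=y\overline{x}w_1x\overline{yx}w_2\cdots x\overline{yx}w_{4n+4}$ and $z_2=w_1x\overline{yx}w_2\cdots x\overline{yx}w_{4n+4}xy$, notes that $\Delta(z_1)=M(4n+4)+\Delta(\overline{x})$ and $\Delta(z_2)=M(4n+4)+\Delta(x)$, and concludes that one of them attains $M(4n+4)+1$.

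Your approach instead attacks $M(16n+15)=M(2n+2)+1$ directly with a full modulo-$8$ decomposition. The gain is conceptual transparency---every ingredient is visible in one place and you never invoke the chain $M(16n+16)=M(8n+8)=M(4n+4)=M(2n+2)$---but the cost is the eight-case bookkeeping you flag, all of which the paper sidesteps by pushing the work into earlier identities. Both arguments are sound; the paper's is shorter because it cashes in prior results.
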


\begin{proof}
First note that $\rho(2n+2)+1=\rho(4n+4)+1.$ Let $w':=w_1w_2\cdots w_{4n+4}\in B_{4n+4}$ such that $\Delta(w')=M(4n+4).$ Then
$$
xy\overline{x}w_1x\overline{yx}w_2\cdots x\overline{yx}w_{4n+4}xy\overline{x}\in B_{16n+19},
$$
and we let
$$
z_1:=y\overline{x}w_1x\overline{yx}w_2\cdots x\overline{yx}w_{4n+4}, 
$$
and
$$
z_2:=w_1x\overline{yx}w_2\cdots x\overline{yx}w_{4n+4}xy.
$$
Then $\Delta(z_1)=M(4n+4)+\Delta(\overline{x})$ and $\Delta(z_2)=M(4n+4)+\Delta(x)$ so that either $\Delta(z_1)$ or $\Delta(z_2)$ is $M(4n+4)+1.$ Since $M(16n+15)\leq M(16n+16)+1=M(4n+4)+1$, we have that $\rho(16n+15)=\rho(4n+4)+1=\rho(2n+2)+1.$
\end{proof}

This completes the proof of Theorem~\ref{identities} (and thus of
Theorem~\ref{2-regular}).

\section{Growth of $\rho(n)$}

We now apply Theorem~\ref{identities} to deduce some information
concerning the growth of the function $\rho(n)$.  Unlike the subword
complexity function, which is strictly increasing for any aperiodic
word, the abelian complexity function can fluctuate considerably.  For
instance, in the case of the paperfolding word we have $\rho(2^n)=3$
for $n \geq 1$ (indeed, apart from the initial value $\rho(1)=2$, the
value $3$ is the smallest value taken by the function $\rho$).  On the
other hand, we have $\rho(n) = \lceil \log_2 (n) \rceil + 2$
infinitely often.  To see where these ``large'' values occur, we
define the sequence
\[
A(i) := \min \{n \in \mathbb{N} : \rho(n) = i+1\}.
\]

\begin{prop}\label{growth}
For $i\geq 1$ we have
\[
A(i) =
\begin{cases}
\displaystyle\frac{2^i+1}{3} &\text{if $i$ is odd,}\\
\displaystyle\frac{2^i+2}{3} &\text{if $i$ is even.}
\end{cases}
\]
\end{prop}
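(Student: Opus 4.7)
The strategy is to prove simultaneously that $\rho(V_i) = i+1$ (which gives $A(i) \leq V_i$) and that $\rho(n) \leq i$ for every $n < V_i$ (which gives $A(i) \geq V_i$), where $V_i$ denotes the value claimed by the proposition; the combination yields $A(i) = V_i$.

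To verify $\rho(V_i) = i+1$, I would induct on $i$. The base cases $i \in \{1,2,3\}$ are read off the initial table. For the inductive step with $i \geq 4$ even, check that $V_{i-1} = (2^{i-1}+1)/3$ is odd (since $2^{i-1} \equiv 2 \pmod 6$ for odd $i-1$), so that $V_i = 2V_{i-1} \equiv 2 \pmod 4$; applying the identity $\rho(4n+2) = \rho(2n+1)+1$ from Theorem~\ref{identities} gives $\rho(V_i) = \rho(V_{i-1}) + 1 = i+1$. For $i \geq 5$ odd, a computation of $2^i \bmod 48$ shows $V_i \equiv 11 \pmod{16}$; writing $V_i = 16k + 11$, a short algebraic check yields $4k+3 = V_{i-2}$, and the identity $\rho(16n+11) = \rho(4n+3)+2$ then gives $\rho(V_i) = \rho(V_{i-2}) + 2 = i+1$.

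For the upper bound, I would establish $\rho(n) \leq \lfloor \log_2(3n) \rfloor + 1$ for every $n \geq 1$, by strong induction on $n$ with case analysis on the residue of $n$ modulo $4$ (and modulo $16$ in the odd cases). In each residue class the appropriate identity of Theorem~\ref{identities} reduces $\rho(n)$ to $\rho(m) + c$ with $c \in \{0,1,2\}$, and the desired bound follows from the inductive hypothesis via a routine comparison of floor logarithms. Granting this bound, $\rho(n) \leq i$ for $n < V_i$ is immediate: the closed forms give $3(V_i - 1) < 2^i$, so $3n < 2^i$ and $\lfloor \log_2(3n) \rfloor \leq i-1$.

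The main technical obstacle will arise in the subcase $n = 16m + 11$ of the upper-bound induction. Here $3n = 48m + 33 = 4(12m + 9) - 3$, so the naive estimate $3n \geq 4(12m+9)$ fails by $3$; to recover $\lfloor \log_2(48m+33) \rfloor \geq \lfloor \log_2(12m+9) \rfloor + 2$, I would exploit the fact that $12m+9 = 3(4m+3)$ is divisible by $3$ and therefore never equals a power of $2$, so that writing $k = \lfloor \log_2(12m+9) \rfloor$ yields $12m+9 \geq 2^k + 1$ and hence $4(12m+9) - 3 \geq 2^{k+2} + 1$, which is the required inequality.
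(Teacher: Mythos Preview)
Your argument is correct. The verification that $\rho(V_i)=i+1$ is essentially identical to the paper's: both use $\rho(4n+2)=\rho(2n+1)+1$ to pass from even $i$ to odd $i-1$, and $\rho(16n+11)=\rho(4n+3)+2$ to descend from odd $i$ to $i-2$.

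For the lower bound $A(i)\geq V_i$, your route differs from the paper's. The paper inducts on $i$: assuming $\rho(m)\leq i$ for all $m<B(i)$, it runs through $m\in\{B(i)+1,\ldots,B(i+1)-1\}$ and, for each residue class modulo $16$, checks that the reduced argument in the relevant identity of Theorem~\ref{identities} lands below $B(i)$ or $B(i-1)$. You instead prove the single inequality $\rho(n)\leq\lfloor\log_2(3n)\rfloor+1$ by strong induction on $n$, and then read off $A(i)\geq V_i$ from $3(V_i-1)<2^i$. The underlying case analysis is the same, but your packaging is cleaner and yields a slightly sharper explicit bound than the $\rho(n)\leq\lceil\log_2 n\rceil+2$ that the paper extracts afterwards (your bound is tight at every $V_i$). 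Your handling of the one genuinely delicate residue, $n\equiv 11\pmod{16}$, is correct: since $12m+9=3(4m+3)$ is a multiple of $3$ it cannot be a power of $2$, so $12m+9\geq 2^k+1$ where $k=\lfloor\log_2(12m+9)\rfloor$, and hence $48m+33=4(12m+9)-3\geq 2^{k+2}+1$, recovering the needed floor inequality.
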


\begin{proof}
We begin by defining
\[
B(i) :=
\begin{cases}
\frac{2^i+1}{3} &\text{if $i$ is odd,}\\
\frac{2^i+2}{3} &\text{if $i$ is even.}
\end{cases}
\]
Note that $B(i+1) \leq 2B(i)$ for $i\geq 1$.

We will show that $A(i) = B(i)$.  We begin by showing that $\rho(B(i))
= i+1$.  Suppose first that $i$ is odd, so that $i=2r+1$.  We have
\begin{eqnarray*}
\rho(B(i)) & = & \rho\left(\frac{2^i+1}{3}\right) \\
& = & \rho\left(\frac{2^{2r+1}+1}{3}\right) \\
& = & \rho\left(\frac{2\cdot 4^r+1}{3}\right).
\end{eqnarray*}
Observe that $(2\cdot 4^r+1)/3 \equiv 11 \pmod{16}$ for $r \geq 2$, so
that we may repeatedly apply the identity $\rho(16n+11) =
\rho(4n+3)+2$ of Theorem~\ref{identities} to obtain
\begin{eqnarray*}
\rho(B(i)) & = &  \rho\left(\frac{2\cdot 4^r+1}{3}\right)\\
& = &  \rho\left(\frac{2\cdot 4^{r-1}+1}{3}\right) + 2\\
& = &  \rho\left(\frac{2\cdot 4^{r-2}+1}{3}\right) + 2 + 2\\
& \vdots &\\
& = & \rho(3) + (r-1)2 = 4 + 2r - 2 = 2r+2 = i+1.
\end{eqnarray*}

Now suppose that $i$ is even.  Observe that when $i$ is even, we have
$(2^i+2)/3 \equiv 2 \pmod 4$, so that we may apply the identity
$\rho(4n+2) = \rho(2n+1)+1$ to obtain $$\rho(B(i)) = \rho((2^i+2)/3) =
\rho((2^{i-1} + 1)/3) + 1 = \rho(B(i-1)) + 1 = i +1,$$ where in the
last step we have used the formula proved above for the odd case.

We have established that $A(i) \leq B(i)$.  We show that $A(i) \geq
B(i)$ by induction on $i$.  We have $\rho(1) = 2$ and $\rho(2) = 3$,
so the result holds for $i=1,2$.  Suppose that for $m < B(i)$ we have
$\rho(m) \leq i$.  Now consider $m \in \{B(i)+1, B(i)+2, \ldots,
B(i+1)-1\}$.  We wish to show that $\rho(m) \leq i+1$.  There are
various cases to consider depending on the congruence class of $m$
modulo $16$.

For instance, if $m=4n+2$, then by Theorem~\ref{identities}, we have
$\rho(m) = \rho(4n+2) = \rho(2n+1)+1$, and since $2n+1 < B(i)$, by the
induction hypothesis we have $\rho(2n+1) \leq i$.  Thus $\rho(m) \leq
i+1$, as required.

Suppose $m=16n+3$.  Then $\rho(m) = \rho(16n+3) = \rho(2n+1) + 2$, and
since $2n+1 < B(i-1)$, by the induction hypothesis we have $\rho(2n+1)
\leq i-1$.  Therefore $\rho(m) \leq i-1+2 = i+1$, as required.

Suppose $m=16n+11$.  Then $\rho(m) = \rho(16n+11) = \rho(4n+3) + 2$.
Since $4n+3 < B(i-1)$, we have $\rho(4n+3) \leq i-1$, so $\rho(m) \leq
i-1+2 = i+1$, as required.

For the other congruence classes modulo 16, the proofs are analogous.
This completes the inductive proof that $A(i) \geq B(i)$.  Putting the
two inequalities together, we get $A(i) = B(i)$, as claimed.
\end{proof}

By taking logarithms in Proposition~\ref{growth}, we get an upper
bound of $\rho(n) \leq \lceil \log_2 (n) \rceil + 2$.

\section{Conclusion}

The present work leaves open some natural problems/questions, including:

\begin{enumerate}
\item Determine the abelian complexity function for all paperfolding
  words.
\item Is the abelian complexity function of a $k$-automatic sequence
  always $k$-regular?
\end{enumerate}

\end{document}